\numberwithin{equation}{section}
\newtheorem{theorem}{Theorem}[section]
\newtheorem{proposition}[theorem]{Proposition}
\newtheorem{lemma}[theorem]{Lemma}
\newtheorem*{theorem*}{Theorem}
\theoremstyle{definition}
\newtheorem{definition}[theorem]{Definition}
\newtheorem{notation}[theorem]{Notation}
\newtheorem{example}[theorem]{Example}
\newtheorem{remark}[theorem]{Remark}
\newcommand{\Tor}{\ensuremath{\mathrm{Tor}}\hspace{1pt}}
\definecolor{MyDarkGreen}{cmyk}{0.7,0,1,0}
\def\cocoa{{\hbox{\rm C\kern-.13em o\kern-.07em C\kern-.13em o\kern-.15em A}}}
\begin{document}

\title[Minimal Resolution Conjecture on Quartics]{The Minimal Resolution Conjecture on a general quartic surface in $\mathbb P^3$}

\author[M.\ Boij]{M. Boij}
\address{Department of Mathematics, KTH
Royal Institute of Technology, S-100 44 Stockholm, Sweden}
\email{boij@kth.se}

\author{J.\ Migliore}
\address{Department of Mathematics \\
University of Notre Dame \\
Notre Dame, IN 46556 USA}
 \email{migliore.1@nd.edu}

\author[R.\ M.\ Mir\'o-Roig]{R.M.\ Mir\'o-Roig}
\address{Facultat de Matem\`atiques, Departament de Matem\`atiques i Inform\`atica,
Gran Via des les Corts Catalanes 585, 08007 Barcelona, Spain}
\email{miro@ub.edu}

\author{U.\ Nagel}
\address{Department of Mathematics\\
University of Kentucky\\
715 Patterson Office Tower\\
Lexington, KY 40506-0027 USA}
\email{uwe.nagel@uky.edu}

\begin{abstract}
Musta\c{t}\u{a} has given a conjecture for the graded Betti numbers in the minimal free resolution of the ideal of a general set of points on an irreducible projective algebraic variety. For surfaces in $\mathbb P^3$ this conjecture has been proven for points on quadric surfaces and on general cubic surfaces. In the latter case, Gorenstein liaison was the main tool. Here we prove the conjecture for general quartic surfaces. Gorenstein liaison continues to be a central tool, but to prove the existence of our links we make use of certain dimension computations. We also discuss the higher degree case, but now the dimension count does not force the existence of our links.
\end{abstract}


\thanks{
{\bf Acknowledgements}:
Boij was partially supported by the grant VR2013-4545.
Migliore was partially supported by Simons Foundation grant \#309556.
Mir\'o-Roig was partially supported by MTM2016-78623-P.
Nagel was partially supported by Simons Foundation grant \#317096.
This paper resulted from work done during a Research in Pairs stay at the Mathematisches Forschungsinstitut Oberwolfach in 2017. All four authors are grateful for the stimulating atmosphere and the generous support of the MFO, as well as to the referee for insightful comments}

\keywords{minimal resolution conjecture; Musta\c{t}\u{a} conjecture; Betti numbers; Gorenstein ideals; liaison; linkage;  Hilbert scheme}

\subjclass[2010]{13D02; 13C40; 13D40; 13E10; 14M06}

\maketitle



\section{Introduction}

The shape of the minimal free resolution (MFR) of a general set, $X$, of points in the projective plane has been known for many years, probably due to Gaeta. See \cite{gaeta} for a more recent description by the same author. For points in $\mathbb P^3$ the shape of the MFR was discovered and shown by Ballico and Geramita \cite{BG}. In her Ph.D. thesis and later in \cite{lorenzini}, Lorenzini conjectured the shape of the MFR for a general set of points in $\mathbb P^n$. Roughly speaking,  if we denote by $R$ the polynomial ring $k[x_0,\dots,x_n]$ ($k$ is an algebraically closed field) and we set
\[
\beta_{i,j} = \hbox{Tor}^i (R/I_X,k)_{i+j},
\]
then the Betti diagram $\{ \beta_{i,j} \}$ consists of two non-trivial rows and we have $\beta_{i,j} \cdot \beta_{i+1,j} = 0$ for all $i$ and $j$. The latter condition says that there are no redundant terms in the MFR.
This conjecture was proven for points in $\mathbb P^4$ by Walter \cite{W}, and for large numbers of points in any projective space by Hirschowitz and Simpson \cite{HS}. However, Schreyer experimentally found a probable counterexample in the case of 11 general points in $\mathbb P^6$ (and a few others), and in a stunning development, a proof was given by Eisenbud and Popescu \cite{EP} that these were, in fact, counterexamples and that there is a much larger class of counterexamples. Further work in this direction was done by Eisenbud, Popescu, Schreyer and Walter \cite{EPSW}.

A new Minimal Resolution Conjecture (MRC) was formulated by Musta\c{t}\u{a} in \cite{mustata} concerning a general set, $X$, of sufficiently many points on an irreducible algebraic subvariety, $S$, of $\mathbb P^n$. Essentially the MRC says that the top part of the Betti diagram for $R/I_X$ consists of the Betti diagram for $R/I_S$, and that below this part there are only two nonzero rows, again with no redundant terms in the MFR. In that paper he stated the MRC and proved several initial results, including a nice periodicity theorem in the case where $S$ is an integral curve. Later, Farkas, Musta\c{t}\u{a} and Popa \cite{FMP} proved the MRC for the case where $S$ is a nonhyperelliptic canonically embedded curve. On the other hand, they proved that the MRC fails when $S$ is a curve of large degree.

Turning to the case that $S$ is a surface in $\mathbb P^3$, several cases of the MRC have been proven. Giuffrida, Maggioni and Ragusa \cite{GMR} proved it for a general set of points on a smooth quadric surface in $\mathbb P^3$. Casanellas \cite{casanellas} proved it for certain cardinalities of points on a smooth cubic surface, and the result was extended to any number of points on a smooth cubic surface more or less at the same time by Migliore and Patnott \cite{MP} and by Mir\'o-Roig and Pons-Llopis \cite{MRP2}. The same latter two authors also prove the MRC for certain cardinalities on del Pezzo surfaces \cite{MRP1}. Casanellas introduced the use of Gorenstein liaison theory in this question, and the latter two papers continued this approach.

The conjecture includes, as special cases, the so-called Ideal Generation Conjecture  and the Cohen-Macaulay Type Conjecture. These govern the expected number of minimal generators of $I_X$ and the expected Cohen-Macaulay type of $R/I_X$; in other words, they govern the beginning and the end of the MFR. For general points in $\mathbb P^2$, either the Ideal Generation Conjecture or the Cohen-Macaulay Type Conjecture imply the other. For general points in $\mathbb P^3$, knowing both the Ideal Generation Conjecture and the Cohen-Macaulay Type Conjecture gives the MRC as a consequence. For higher projective spaces, this is not true. In \cite{MRP1}, Mir\'o-Roig and Pons-Llopis also address the latter two conjectures for zero-dimensional schemes on del Pezzo surfaces. In \cite{MRP2} the authors also use the known work on ideals generated by general forms of fixed degree in $k[x,y,z]$.

The goal in this paper is to understand the minimal free resolution of a general set of points, $X$, of fixed cardinality on a general surface, $S$, of degree $d$. We will assume that the socle degree of $X$ is larger than $d$, so that $S$ is the unique surface of degree $d$ containing $X$. By semicontinuity, it is enough to produce one set of points with the desired resolution, on any surface of degree $d$. Our strategy is to use Gorenstein liaison to control the resolutions, extending the known methods.  We develop a general framework and then specialize to quartics.

A general set of points on an irreducible surface $S$ is {\em relatively compressed} (see Definition \ref{rel comp}), and this allows us to partition the cardinalities of the general finite subsets according to the {\em socle degree}, $e$, of their artinian reductions. Our approach to the MRC is along the lines of this partition, using induction on $e$. We use liaison theory to describe certain links that, if they exist, are enough to give the conjectured resolution for any given $e \geq d+1$. (Lower values of $e$ come for free from work of Ballico and Geramita \cite{BG}.) This is done in Section~\ref{liaison section}.


Producing the needed links turns out to be a very delicate task. In the first version of this paper 
we used a dimension argument based on a theorem of Kleppe and the third author in  \cite{KM},
which only worked for the case of general quartic surfaces. Although we believe that all the steps of the argument are correct, we were unable to justify one key step pointed out by the referee. Thus we were forced to find a completely different proof, which is contained in Section 4. Unfortunately it also only works for $d=4$ (see Remark \ref{higher degree remark}, although it is possible that suitable modifications will allow progress). Nevertheless, it is enough to give us our main theorem, Theorem \ref{MRC on quartics}, that the MRC is true on a general quartic surface.

The main ideas of the proof are as follows. First, by semicontinuity it is enough to prove the MRC on any quartic surface. Thus, we produce a special quartic surface containing certain arithmetically Cohen-Macaulay (ACM) curves. These curves allow us to produce other ACM curves via liaison, and the Gorenstein links that we need are produced by using twisted anticanonical divisors \cite{KMMNP}, controlling the dimension of certain linear systems, and showing that the links can be performed for general sets of points according to the results in Section~\ref{liaison section}. The argument is by induction on the socle degree, so we also need to produce the initial cases. This is also done largely via liaison, but we also use in a crucial way classical results of Ellingsrud \cite{E} and of Perrin \cite{P}.


\section{Background}

We let $R = k[x_0,x_1,x_2,x_3]$ where  $k$ is an algebraically closed field of characteristic zero.
If $R/I$ is a standard graded algebra over $R$, it admits a minimal free resolution
\[
\mathbb F: \ \ 0 \rightarrow \mathbb F_4 \rightarrow \mathbb F_3 \rightarrow \mathbb F_2 \rightarrow \mathbb F_1 \rightarrow R \rightarrow R/I \rightarrow 0.
\]
If $\mathbb F_i = \bigoplus_{j \in \mathbb N} R(-j)^{\beta_{i,j}}$ then $\beta_{i,j} = \dim \Tor _i(\mathbb F,k)_j$ and
the {\em  Betti table} for $R/I$ is the array
\[
\left [
\begin{array}{cccccccccc}
1 & \beta_{1,1} & \beta_{2,2} & \beta_{3,3} & \beta_{4,4} \\
- & \beta_{1,2} & \beta_{2,3} & \beta_{3,4} & \beta_{4,5} \\
- & \beta_{1,3} & \beta_{2,4} & \beta_{3,5} & \beta_{4,6} \\
\vdots & \vdots & \vdots & \vdots  & \vdots
\end{array}
\right ]
\]
(In general, when $R = k[x_0,\dots,x_n]$ then there are $n+2$ columns in the Betti table by Hilbert's theorem.)
Let $X \subset \mathbb P^3$ be a general set of points. The Minimal Resolution Conjecture (MRC) of Lorenzini \cite{lorenzini} was given for points in $\mathbb P^n$, and  says that there are only two non-zero rows of the associated Betti table, and that in these two rows it always holds that $\beta_{i,j} \cdot \beta_{i+1,j} = 0$. As already mentioned, this was shown for $n=3$ by Ballico and Geramita \cite{BG}.

Musta\c t\u a's version of the MRC deals with a general set of points, $X$, on a fixed variety $V$ in projective space. It says, essentially, that for a large set of points the top part of the Betti table is the Betti table of $R/I_V$, and that below this there are again at most two rows, which again satisfy $\beta_{i,j} \cdot \beta_{i+1,j} = 0$. In this paper we will take $V$ to be a general surface, $S$, of degree $d$ in $\mathbb P^3$, and we will state the conjecture more precisely in a moment.

As noted in the introduction, the Minimal Resolution Conjecture is known for sets of points on a smooth quadric and on a general cubic surface in $\mathbb P^3$. Thus from now on in this paper, we will assume that the degree of our surface is $d \geq 4$.
We will use the following notation.

\begin{notation}
Let $S$ be an irreducible surface of degree $d \geq 4$ in $\mathbb P^3$. We denote by $H_S$ the Hilbert function of $S$, and by $h_S$ the first difference $h_S (x)= \Delta H_S (x):= H_S(x) - H_S(x-1)$. For any set of points $X$ on $S$ we define the {\em $h$-vector} of $X$ as the first difference of its Hilbert function.
\end{notation}

\begin{remark} \label{formulas}
We note that
\[
\begin{array}{rcll}
H_S(x) & = & \displaystyle  \binom{x+3}{3} - \binom{x-d+3}{3} \\ \\
& = & \displaystyle \frac{d}{2} x^2 - \left ( \frac{d^2-4d}{2} \right ) x + \binom{d-1}{3} +1 & \hbox{for } x \geq d -1
\end{array}
\]
and
\[
h_S(x)  =  \displaystyle dx - \binom{d-1}{2} +1  \hbox{\ \ \ \ for } x \geq d-1.
\]
Of course the latter is the Hilbert polynomial of a plane curve of degree $d$.
\end{remark}

\begin{definition} \label{rel comp}
Let $S$ be an irreducible surface in $\mathbb P^3$ and let $X \subset S$ be a finite set of  points. We will say that the Hilbert function of $X$ on $S$ is {\em relatively compressed}  if one of the following two situations holds.

\begin{itemize}
\item[(a)] If $X$ is arithmetically Gorenstein of socle degree $n$ with $h$-vector $h_X$ then $h_X (x) = h_S (x)$ for $0 \leq x \leq \left \lfloor \frac{n}{2} \right \rfloor$, and the rest of the $h$-vector is determined by symmetry.

\item[(b)] Otherwise, $X$ is relatively compressed on $S$ if there  are integers $e \geq 0$ and $1 \leq t \leq h_S(e)$ such that
\[
h_X(x) =
\left \{
\begin{array}{cl}
h_S(x) & \hbox{for } 0 \leq x \leq e-1; \\
t & \hbox{for } x = e; \\
0 & \hbox{for } x > e.
\end{array}
\right.
\]
\end{itemize}
We usually simply say that $X$ itself is relatively compressed if its Hilbert function is.
\end{definition}

In particular, a general set of points on $S$ is relatively compressed. The following is an equivalent formulation and additional terminology.

\begin{definition}
Let $S$ be an irreducible surface of degree $d$ in $\mathbb P^3$. Let $X \subset S$ be a general set of points of fixed cardinality. Define the integers $e$ and $t$ by $H_S(e-1) < |X| = H_S(e-1) +t   \leq H_S(e)$ for some $e \geq 3$ and $1 \leq t  \leq h_S(e)$. Then $e$ is called the {\em socle degree} of $X$ and we will call $t$ the {\em surplus} of $X$. We will  denote by $X_{e,t}$ a general set of points on $S$ with socle degree $e$ and surplus $t$. Note that the $h$-vector of $X_{e,t}$ is
\[
\left ( 1,3,6, \dots, h_S(e-1),t \right )
\]
and that
\[
|X_{e,t}| = H_S(e-1)+t.
\]
\end{definition}

We now recall the {\em Minimal Resolution Conjecture (MRC) for surfaces in $\mathbb P^3$}. Let $S$ be a general surface of degree $d$. If $e \leq d$, the conjecture coincides with the MRC (now a theorem \cite{BG}) for points in $\mathbb P^3$.  Now assume that $e \geq d+1$. In this case the conjecture says that the shape of the Betti table for $R/I_{X_{e,t}}$ is
{ \footnotesize \[
\begin{array}{cccccccccc}
1 & -  & - & -  \\
-& 0 & 0 & 0 \\
&& \vdots \\
-& 0 & 0 & 0 \\
- & 1 & - & -  \\
-& 0 & 0 & 0 \\
& & \vdots \\
-& 0 & 0 & 0 \\
- & \beta_{1,e} & \beta_{2,e+1} & \beta_{3,e+2}  \\
- & \beta_{1,e+1} & \beta_{2,e+2} & \beta_{3,e+3} &  \\
- & 0 & 0 & 0 \\
\end{array}
\]}
\hspace{-.3cm} with $\beta_{1,e+1} \cdot \beta_{2,e+1} = 0$ and $\beta_{2,e+2} \cdot \beta_{3,e+2} = 0$. To more easily visualize the needed mapping cones in Section \ref{liaison section}, we will use the following  notation for this conjectured minimal free resolution.
\[
0 \rightarrow
\begin{array}{c}
R(-e-3)^{c_2} \\
\oplus \\
R(-e-2)^{c_1}
\end{array}
\rightarrow
\begin{array}{c}
R(-e-2)^{b_2} \\
\oplus \\
R(-e-1)^{b_1}
\end{array}
\rightarrow
\begin{array}{c}
R(-e-1)^{a_2} \\
\oplus \\
R(-e)^{a_1}\\
\oplus \\
R(-d)
\end{array}
\rightarrow I_{X_{e,t}} \rightarrow 0
\]
where $a_2 \cdot b_1 = 0$, $b_2 \cdot c_1 = 0$, $a_1 = h_S(e)-t$ and  $c_2 = t$. More precisely, we will say that the {\em Ideal Generation Conjecture} holds if $a_2 \cdot b_1 = 0$ and we will say that the {\em Cohen-Macaulay Type Conjecture} holds if $b_2 \cdot c_1 = 0$.

Musta\c t\u a showed in \cite{mustata}  (Examples 1 and 2) that the MRC holds when $t = h_S(e)$ or $t = h_S(e)-1$.

\begin{proposition} \label{intervals}
Let $S$ be an irreducible surface of degree $d$ in $\mathbb P^3$. For a given socle degree $e \geq d$, there are at most four values of the surplus $t$ needed in order to prove the MRC for all general sets of points on $S$ with socle degree $e$. More precisely, let
\[
\begin{array}{rcl}
m_1(e) & = & \max \{ t \ | \ 3t \leq h_S(e-1) \} \\
m_2(e) & = & \min \{ t \ | \ 3t \geq h_S(e-1) \} \\
m_3(e) & = & \max \{ t \ | \ 3(h_S(e) -t) \geq h_S(e+1) \} \\
m_4(e) & = & \min \{ t \ | \ 3(h_S(e) - t) \leq h_S(e+1) \}
\end{array}
\]
Notice that $m_1(e) \leq m_2(e) \leq m_3(e) \leq m_4(e)$.

\begin{itemize}

\item[(a)] If $X_{e,m_1(e)}$ and $X_{e,m_2(e)}$ both satisfy the Cohen-Macaulay Type Conjecture then so does $X_{e,t}$ for all $t$.

\item[(b)] If $X_{e,m_3(e)}$ and $X_{e,m_4(e)}$ both satisfy the Ideal Generation Conjecture then so does $X_{e,t}$ for all $t$.

\item[(c)] If $d$ is divisible by 3 then $m_1(e) = m_2(e)$ and $m_3(e) = m_4(e)$.
\end{itemize}
\end{proposition}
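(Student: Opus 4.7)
My plan is to fix the relevant Betti numbers from the Hilbert function and then establish monotonicity of $a_2$, $b_1$, $b_2$, $c_1$ in the surplus $t$ via a long exact sequence of $\Tor$ obtained by adding one general point. Because $X_{e,t}$ is relatively compressed, its Hilbert function is completely determined, and comparing it with the conjectured minimal resolution yields the forced identities
\[
a_2 - b_1 = h_S(e+1) - 3\bigl(h_S(e)-t\bigr), \qquad b_2 - c_1 = 3t - h_S(e-1),
\]
together with $a_1 = h_S(e)-t$ and $c_2 = t$. These use that $\reg R/I_{X_{e,t}} = e$ and that $I_{X_{e,t}}$ and $I_S$ agree through degree $e-1$, which between them confine the minimal free resolution to the bidegrees shown. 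Consequently the IGC is equivalent to \emph{one} of $a_2, b_1$ vanishing, with which one determined by the sign of $h_S(e+1) - 3(h_S(e)-t)$; likewise the CMTC is equivalent to one of $b_2, c_1$ vanishing according to the sign of $3t - h_S(e-1)$. The four integers $m_i(e)$ are exactly the values of $t$ straddling these two sign changes.

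Next, write $X_{e,t} = X_{e,t-1} \cup \{P\}$ for $P$ a general point of $S$. By irreducibility of the Hilbert scheme of points of $S$, both sets may be taken as generic members of their respective families. The short exact sequence
\[
0 \to I_{X_{e,t}} \to I_{X_{e,t-1}} \to M \to 0
\]
has quotient $M \cong \bigl(R/(\ell_0, \ell_1, \ell_2)\bigr)(-e)$, where $\ell_0, \ell_1, \ell_2$ are linear forms cutting out $P$; the Koszul resolution of $M$ gives $\Tor_i^R(M, k)$ of rank $\binom{3}{i}$ concentrated in internal degree $e+i$. Writing $a_2(t), b_1(t), b_2(t), c_1(t)$ for the generic Betti numbers of $X_{e,t}$, the $\Tor$ long exact sequence in internal degrees $e+1$ and $e+2$ becomes (using the integer labels for the corresponding vector-space dimensions)
\[
0 \to b_1(t) \to b_1(t-1) \to 3 \to a_2(t) \to a_2(t-1) \to 0,
\]
\[
0 \to c_1(t) \to c_1(t-1) \to 3 \to b_2(t) \to b_2(t-1) \to 0.
\]
Injectivity of the leftmost maps forces $b_1$ and $c_1$ to be weakly decreasing in $t$, and surjectivity of the rightmost forces $a_2$ and $b_2$ to be weakly increasing in $t$.

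Parts (a) and (b) now follow immediately. For (a), the CMTC at $t = m_1(e)$ says $b_2(m_1(e)) = 0$, whence $b_2(t) = 0$ for every $t \leq m_1(e)$; dually, the CMTC at $m_2(e)$ forces $c_1(t) = 0$ for all $t \geq m_2(e)$. Since $m_2(e) \leq m_1(e) + 1$, the two ranges cover $[1, h_S(e)]$. Part (b) is the symmetric argument with $(m_3(e), m_4(e))$ and $(a_2, b_1)$. For (c), when $3 \mid d$ the formula $h_S(n) = dn - \binom{d-1}{2} + 1$ together with $\binom{d-1}{2} \equiv 1 \pmod 3$ gives $h_S(n) \equiv 0 \pmod 3$ for every $n \geq d-1$; hence the threshold values $h_S(e-1)/3$ and $h_S(e) - h_S(e+1)/3$ are integers, forcing $m_1(e) = m_2(e)$ and $m_3(e) = m_4(e)$.

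The main obstacle to this plan is certifying that the entries $\Tor^R_i(I_X, k)_j$ in the two six-term sequences really pick out exactly $a_2, b_1, b_2, c_1$, with no contribution from extraneous Betti numbers in other internal degrees. This rests on the regularity bound $\reg R/I_{X_{e,t}} = e$ together with the initial-degree agreement $(I_{X_{e,t}})_n = (I_S)_n$ for $n < e$ forced by relative compression, and is also what justifies the Hilbert-function identities used in the first step.
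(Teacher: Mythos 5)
Your proposal is correct, and it amounts to a fully written-out version of the argument the paper delegates to \cite{mustata} (Proposition 1.7(i)) and \cite{MP} (Section 3): the paper's two-line proof phrases the two pairs of thresholds in terms of the artinian reduction and its canonical module, whereas you work directly with $I_{X_{e,t}}$, pinning down $a_2-b_1=h_S(e+1)-3(h_S(e)-t)$ and $b_2-c_1=3t-h_S(e-1)$ from the Hilbert series and obtaining the monotonicity of $a_2,b_1,b_2,c_1$ in $t$ from the graded $\Tor$ long exact sequence of $0\to I_{X_{e,t}}\to I_{X_{e,t-1}}\to (R/I_P)(-e)\to 0$. Both routes rest on the same two facts — the resolution is confined to the displayed bidegrees (regularity $e$ of $R/I_{X_{e,t}}$ together with $(I_{X_{e,t}})_n=(I_S)_n$ for $n<e$), and the relevant graded Betti numbers move monotonically as points are added — so the content is the same; yours is simply self-contained. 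Two small points of hygiene, neither affecting correctness: the genericity transfer from $X_{e,t}$ to $X_{e,t-1}$ is better phrased via the irreducible configuration space $S^{N}$ (or symmetric product) than via the Hilbert scheme of $S$, whose irreducibility requires smoothness, while $S$ is only assumed irreducible; and the identification $M\cong (R/(\ell_0,\ell_1,\ell_2))(-e)$ uses that a general point of $S$ imposes an independent condition on $(I_{X_{e,t-1}})_n$ for every $n\ge e$, which is exactly the statement that general points on $S$ are relatively compressed, so it is worth saying explicitly.
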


\begin{proof}
This has been observed before -- see for instance \cite{mustata} Proposition 1.7(i) or \cite{MP} section 3 (for the case $d=3$). The idea is to pass to the artinian reduction of $R/I_{X_{e,t}}$. For $t \leq m_1(e)$ the generators of least degree of the canonical module have no linear syzygies, and for $t \geq m_2(e)$ the canonical module is generated in its least degree. Similarly, for $t \leq m_3(e)$ the ideal is generated only in degree~$e$, and for $t \geq m_4(e)$ the generators of degree $e$ in the ideal have no linear syzygies.

\end{proof}


\section{Liaison considerations} \label{liaison section}

In this paper we will make extensive use of Gorenstein liaison.  The arithmetically Gorenstein sets of points on our surface $S$ of degree $d$ that we will consider are {\em relatively compressed}. In the case of even socle degree there is one peak in the $h$-vector, and in the case of odd socle degree there are two.

In this section we will assume that we can always find Gorenstein sets of points containing our general points $X_{e,t}$ and show how liaison is used to build larger sets with the desired resolution. This is the basis for our induction on $e$ to prove the MRC.


\subsection{Links of type 1} \label{Links of type 1}

Let $S$ be a surface of  degree $d$ (either even or odd) and let $X_{e-2,t} \subset S$ with $e \geq d+1$. Assume that $X_{e-2,t}$ satisfies the MRC. The $h$-vector of $X_{e-2,t}$ is
\[
 (1,3,6,\dots, h_S (e-3),t)
\]
and the minimal free resolution of $X_{e-2,t}$ is
\[
0 \rightarrow
\begin{array}{c}
R(-e-1)^{c_2} \\
\oplus \\
R(-e)^{c_1}
\end{array}
\rightarrow
\begin{array}{c}
R(-e)^{b_2} \\
\oplus \\
R(-e+1)^{b_1}
\end{array}
\rightarrow
\begin{array}{c}
R(-e+1)^{a_2} \\
\oplus \\
R(-e+2)^{a_1}\\
\oplus \\
R(-d)
\end{array}
\rightarrow I_{X_{e-2,t}} \rightarrow 0
\]
where $a_2 \cdot b_1 = 0$, $b_2 \cdot c_1 = 0$, $a_1 = h_S(e-2)-t$ and  $c_2 = t$. Notice that necessarily we have $1 \leq t \leq h_S(e-2)-1$ by definition of $e$.

Let $G$ be an arithmetically Gorenstein set of points containing $X_{e-2,t}$ that is relatively compressed of socle degree $2e-2$.  $G$ links $X_{e-2,t}$ to a residual set $Z$ with $h$-vector
\[
(1,3,6,\dots, h_S(e-1), h_S(e-2)-t).
\]
We will call this a {\em link of type 1}. The minimal free resolution of $I_G$ is
\begin{equation} \label{type 1 resolution}
0 \rightarrow R(-2e-1) \rightarrow
\begin{array}{c}
R(-2e-1+d) \\
\oplus \\
R(-e-1)^{2d}
\end{array}
\rightarrow
\begin{array}{c}
R(-e)^{2d}\\
\oplus \\
R(-d)
\end{array}
\rightarrow I_G \rightarrow 0.
\end{equation}
Splitting off the two copies of $R(-d)$, the mapping cone (see \cite{Wei}, \cite{PS}) gives the following minimal free resolution for $I_Z$:
\[
0 \rightarrow
\begin{array}{c}
R(-e-3)^{a_1} \\
\oplus \\
R(-e-2)^{a_2}
\end{array}
\rightarrow
\begin{array}{c}
R(-e-2)^{b_1} \\
\oplus \\
R(-e-1)^{b_2+2d}
\end{array}
\rightarrow
\begin{array}{c}
R(-e-1)^{c_1} \\
\oplus \\
R(-e)^{2d+c_2}\\
\oplus \\
R(-d)
\end{array}
\rightarrow I_{Z} \rightarrow 0
\]
We make the following observations.

\begin{enumerate}

\item The condition $a_2 \cdot b_1 = 0$ guarantees that there are no redundant copies of $R(-e-2)$.

\item If $c_1 = 0$ then there are no redundant copies of $R(-e-1)$. Equivalently, this holds if $t \geq m_2(e-2)$.

\item The socle degree of $Z$ is $e$ and the surplus is $s = h_S(e-2) -t = a_1$.

\end{enumerate}

We have shown:

\begin{proposition} \label{type 1}
Assume that $X_{e-2,t}$ has the minimal free resolution predicted by the MRC. 
Assume that a relatively compressed arithmetically Gorenstein set of points $G$ with socle degree $2e-2$ can be found containing $X_{e-2,t}$. Assume $e \geq d+1$. If $h_S(e-2)>t \geq m_2(e-2)$ then
$G$ links $X_{e-2,t}$ to a set of points $Z$ with socle degree $e$, relatively compressed $h$-vector, and having the minimal free resolution predicted by the MRC. The surplus, $s$, of $Z$ satisfies
\[
1 \leq s = h_S(e-2)-t  \leq \frac{2}{3} \cdot h_S(e) - \frac{5d}{3}.
\]
\end{proposition}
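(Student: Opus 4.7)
First, I would verify the $h$-vector of $Z$. Since $G$ is arithmetically Gorenstein of socle degree $2e-2$ linking $X_{e-2,t}$ to $Z$, the standard Hilbert-function formula for liaison of zero-dimensional schemes gives
\[
h_Z(i) \;=\; h_G(2e-2-i) - h_{X_{e-2,t}}(2e-2-i).
\]
Combining the fact that $G$ is relatively compressed on $S$ (so $h_G(j) = h_S(j)$ for $j \leq e-1$ and $h_G$ is symmetric about $e-1$) with the known $h$-vector of $X_{e-2,t}$, a short case check according as $i \leq e-2$, $i = e-1$, $i = e$, or $i \geq e+1$ yields $h_Z = (1,3,6,\ldots,h_S(e-1),\,h_S(e-2)-t)$. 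Thus $Z$ is relatively compressed of socle degree $e$ with surplus $s = h_S(e-2) - t$, and $s \geq 1$ because $t < h_S(e-2)$ by hypothesis.

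Next, I would build the MFR of $I_Z$ by the standard Gorenstein mapping cone (cf.\ \cite{Wei}, \cite{PS}): dualize the assumed MFR of $R/I_{X_{e-2,t}}$, twist by $R(-2e-1)$ (the top of \eqref{type 1 resolution}), and map it into the MFR of $R/I_G$. After splitting off the trivial $R$ at the bottom, together with the two copies of $R(-d)$ that appear as matched summands, the resulting complex has exactly the free modules displayed in the statement. The minimality of this complex then reduces to the three checks (1)--(3) appearing before the statement. Item (1), $a_2\cdot b_1=0$, is part of the MRC assumption on $X_{e-2,t}$; this rules out cancellation at $R(-e-2)$ in the mapping cone and simultaneously produces the CM-Type relation $b_2^Z\cdot c_1^Z=0$ for $Z$. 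Item (2) is the essential use of the hypothesis $t\geq m_2(e-2)$: by Proposition~\ref{intervals} (or equivalently, by the fact that $t \geq m_2(e-2)$ is exactly the condition for the canonical module of the artinian reduction of $R/I_{X_{e-2,t}}$ to be generated in its least degree), this forces $c_1=0$, which both prevents cancellation at $R(-e-1)$ and yields the Ideal Generation relation $a_2^Z\cdot b_1^Z=0$ for $Z$. Item (3) then reads off $a_1^Z = 2d+t = h_S(e)-s$ and $c_2^Z = h_S(e-2)-t = s$, confirming that $Z$ matches the MRC shape for socle degree $e$ and surplus $s$.

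Finally, the bound on the surplus is a routine calculation. The hypothesis $t \geq m_2(e-2)$ gives $3t \geq h_S(e-3)$, whence
\[
s \;=\; h_S(e-2) - t \;\leq\; h_S(e-2) - \tfrac{1}{3}h_S(e-3).
\]
Since $e \geq d+1$ places $e-3,\, e-2,\, e$ in the linear range of Remark~\ref{formulas}, substituting $h_S(x) = dx - \binom{d-1}{2} + 1$ and simplifying in terms of $h_S(e)$ and $d$ yields the claimed upper bound. The only genuine obstacle I anticipate is the minimality check at $R(-e-1)$, which is exactly what the hypothesis $t \geq m_2(e-2)$ is engineered to handle; everything else is standard mapping-cone bookkeeping together with the Hilbert-function computation at the start.
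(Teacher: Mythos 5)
Your argument follows the paper's proof essentially step for step: the $h$-vector of $Z$ via the liaison formula, the mapping cone with the two copies of $R(-d)$ split off, the identification of the two possible cancellations (at $R(-e-2)$, killed by $a_2\cdot b_1=0$, and at $R(-e-1)$, killed by $c_1=0$, i.e.\ by $t\ge m_2(e-2)$), and the resulting relations $b_2^Z\cdot c_1^Z = b_1\cdot a_2 = 0$ and $a_2^Z\cdot b_1^Z = c_1\cdot(b_2+2d)=0$ for $Z$. All of that is correct and is exactly what the paper does.

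The problem is your last sentence on the bound, where you assert that substituting the linear formula for $h_S$ ``yields the claimed upper bound.'' It does not. From $t\ge \tfrac13 h_S(e-3)$ and $h_S(e-2)-h_S(e-3)=d$, $h_S(e)-h_S(e-2)=2d$ one gets
\[
s \;\le\; h_S(e-2)-\tfrac13 h_S(e-3) \;=\; \frac{2h_S(e-2)+d}{3} \;=\; \frac{2h_S(e)-3d}{3} \;=\; \frac{2}{3}h_S(e)-d,
\]
which is strictly larger than the stated $\frac{2}{3}h_S(e)-\frac{5d}{3}$. The stated inequality is actually false: for $d=4$, $e=6$ one has $m_2(4)=4$, $h_S(4)=14$, so $t=4$ gives $s=10$, while $\frac{2}{3}h_S(6)-\frac{20}{3}=8$. (The paper's own derivation contains the sign slip $3h_S(e-2)-h_S(e-3)=2h_S(e-2)-d$ in place of $2h_S(e-2)+d$.) Since the correct bound is larger, nothing downstream breaks --- Lemma~\ref{lemma1} and Theorem~\ref{MRC theorem} only need the type-1 range of $s$ to reach down past $\frac13 h_S(e)$, and a larger upper endpoint only helps --- but you cannot certify the displayed inequality as a ``routine calculation''; carrying the calculation out is precisely what exposes that the bound as written needs to be corrected to $\frac{2}{3}h_S(e)-d$.
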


\begin{proof}
The fact that $Z$ has the minimal free resolution predicted by the MRC is what we proved before the statement of this proposition. For the rest, we have
\[
\begin{array}{rcl}
1 \ \leq \ s & = & h_S(e-2) - t \\
& \leq & h_S(e-2) - \frac{1}{3} h_S(e-3) \\
& = & \displaystyle \frac{3h_S(e-2) - h_S(e-3)}{3} \\
& = & \displaystyle \frac{2 h_S(e-2) -d}{3} \\
& = & \displaystyle \frac{2 (h_S(e) - 2d) -d}{3}
\end{array}
\]
from which the result follows. The only comment is that we used the definition of $m_2(e-2)$, Proposition \ref{intervals} and the fact that $h_S(e-2) - h_S(e-3) = d$.
\end{proof}


\subsection{Links of type 2} \label{Links of type 2}

As explained in Subsection \ref{problem with odd degree surface}, we will now assume that $d$ is even.
Let $S$ be a surface of even degree $d$ and let $X_{e-1,t} \subset S$ with $e \geq d+1$. 

Assume that $X_{e-1,t}$ satisfies the MRC. The $h$-vector of $X_{e-1,t}$ is
\[
 (1,3,6,\dots, h_S (e-2),t)
\]
and the minimal free resolution of $X_{e-1,t}$ is
\[
0 \rightarrow
\begin{array}{c}
R(-e-2)^{c_2} \\
\oplus \\
R(-e-1)^{c_1}
\end{array}
\rightarrow
\begin{array}{c}
R(-e-1)^{b_2} \\
\oplus \\
R(-e)^{b_1}
\end{array}
\rightarrow
\begin{array}{c}
R(-e)^{a_2} \\
\oplus \\
R(-e+1)^{a_1}\\
\oplus \\
R(-d)
\end{array}
\rightarrow I_{X_{e-1,t}} \rightarrow 0
\]
where $a_2 \cdot b_1 = 0$, $b_2 \cdot c_1 = 0$, $a_1 = h_S(e-1)-t$, and $c_2 = t$.

Let $G$ be an arithmetically Gorenstein set of points on $S$ that is relatively compressed of socle degree $2e-1$, having $d+1$ minimal generators (recall that $d$ is even).
Ignoring for a moment the question of containing $X_{e-1,t}$, the minimal free resolution of  such $I_G$ is
\begin{equation} \label{type 2 resolution}
0 \rightarrow R(-2e-2) \rightarrow
\begin{array}{c}
R(-2e-2+d) \\
\oplus \\
R(-e-2)^{d}
\end{array}
\rightarrow
\begin{array}{c}
R(-e)^{d}\\
\oplus \\
R(-d)
\end{array}
\rightarrow I_G \rightarrow 0.
\end{equation}

If such $G$ exists containing $X_{e-1,t}$, we will call this a {\em link of type 2}. Then $G$ links $X_{e-1,t}$ to a residual set $Z$ with $h$-vector
\[
(1,3,6,\dots, h_S(e-1), h_S(e-1)-t).
\]
Splitting off the two copies of $R(-d)$, the mapping cone gives the following minimal free resolution for $I_Z$:
\[
0 \rightarrow
\begin{array}{c}
R(-e-3)^{a_1} \\
\oplus \\
R(-e-2)^{a_2}
\end{array}
\rightarrow
\begin{array}{c}
R(-e-2)^{d+b_1} \\
\oplus \\
R(-e-1)^{b_2}
\end{array}
\rightarrow
\begin{array}{c}
R(-e-1)^{c_1} \\
\oplus \\
R(-e)^{d+c_2}\\
\oplus \\
R(-d)
\end{array}
\rightarrow I_{Z} \rightarrow 0
\]
We make the following observations.

\begin{enumerate}

\item The condition $b_2 \cdot c_1 = 0$ guarantees that there are no redundant copies of $R(-e-1)$.

\item If $a_2 = 0$ then there are no redundant copies of $R(-e-2)$ in the latter minimal free resolution. This happens when $I_{X_{e-1,t}}$ (in the first resolution) has no minimal generator of degree $e$. Equivalently, since we assumed that $X_{e-1,t}$ satisfies the MRC, this holds if $t \leq m_3(e-1)$.

\item If $b_1 =0$, then we again get no redundant copies of $R(-e-2)$ provided we can split off all the degree $e$ minimal generators of $I_G$. However, in general this is not possible.

\item The socle degree of $Z$ is $e$ and the surplus is $s = h_S(e-1) -t$.

\end{enumerate}

We have shown:

\begin{proposition} \label{type 2}
Assume that $X_{e-1,t}$ has the minimal free resolution predicted by the MRC. 
Assume that $d$ is even and that $e \geq d+1$.
Assume that a relatively compressed arithmetically Gorenstein set of points $G$ with socle degree $2e-1$ and resolution (\ref{type 2 resolution}) can be found containing $X_{e-1,t}$. If $t \leq m_3(e-1)$ then $G$ links $X_{e-1,t}$ to a set of points $Z$ with socle degree~$e$, having a relatively compressed $h$-vector, and having the minimal free resolution predicted by the MRC. The surplus, $s$, of $Z$ satisfies
\[
h_S(e-1) - 1 \geq s = h_S(e-1)-t  \geq  \frac{h_S(e)}{3} .
\]
\end{proposition}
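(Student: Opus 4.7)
The plan is to assemble the proof from the pieces already laid out in the discussion preceding the statement. The central ingredient is the mapping cone for $I_Z$ written out just above the proposition; our task is to certify that this cone already has the MRC shape under the hypothesis $t \leq m_3(e-1)$, and to record the resulting surplus.

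First I would verify that $Z$ has the claimed $h$-vector via the standard Gorenstein liaison formula $h_Z(i) = h_G(2e-1-i) - h_{X_{e-1,t}}(2e-1-i)$. Since $G$ is relatively compressed of socle degree $2e-1$, its $h$-vector agrees with $h_S$ on $[0, e-1]$ and is then determined by symmetry. Subtracting gives $h_Z = (1, 3, 6, \ldots, h_S(e-2), h_S(e-1), h_S(e-1) - t)$, which is relatively compressed of socle degree $e$ with surplus $s = h_S(e-1) - t$.

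Next I would show that the mapping cone is already minimal, i.e., has MRC shape. Two potential redundancies must be ruled out. The cancellation of summands $R(-e-1)$ between the middle and right terms is addressed by observation (1) preceding the statement: it is forbidden by $b_2 \cdot c_1 = 0$, which is the Cohen--Macaulay Type condition in the MRC assumed for $X_{e-1,t}$. The cancellation of summands $R(-e-2)$ between the left and middle terms is observation (2): it is forbidden when $a_2 = 0$, and by the MRC for $X_{e-1,t}$ this invariant (counting minimal generators of $I_{X_{e-1,t}}$ in degree $e$) vanishes precisely when $t \leq m_3(e-1)$, by the definition of $m_3$. Reading off the Betti table of the resulting minimal free resolution of $I_Z$, the new invariants satisfy $a_1^{\text{new}} = d + t$ and $c_2^{\text{new}} = s$; the Ideal Generation Conjecture for $Z$ amounts to $c_1 \cdot b_2 = 0$, while the Cohen--Macaulay Type Conjecture for $Z$ amounts to $(d + b_1)\cdot a_2 = 0$, and both hold by what has just been verified.

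The surplus inequalities are then a direct calculation: from $t \geq 1$ we obtain $s \leq h_S(e-1) - 1$, and from $t \leq m_3(e-1)$ the defining inequality $3(h_S(e-1) - t) \geq h_S(e)$ gives $s \geq h_S(e)/3$. The main conceptual point, and essentially the only real content of the argument, is that the threshold $m_3$ has been engineered precisely to force $a_2 = 0$, which is what makes the cone minimal on the left; everything else is bookkeeping already handled in the lead-up to the statement.
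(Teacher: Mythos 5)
Your proposal is correct and follows the same route as the paper: the paper's proof simply invokes the mapping-cone analysis and observations (1)--(2) given immediately before the statement (where $b_2\cdot c_1=0$ kills the $R(-e-1)$ cancellation and $t\le m_3(e-1)$ forces $a_2=0$, killing the $R(-e-2)$ cancellation), and then derives the surplus bounds from $t\ge 1$ and the defining inequality of $m_3(e-1)$ exactly as you do. Your additional bookkeeping (the linkage formula for $h_Z$ and the identification of the new invariants $a_1^{\mathrm{new}}=d+t$, $c_2^{\mathrm{new}}=s$) is consistent and only makes explicit what the paper leaves implicit.
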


\begin{proof}
The fact that $Z$ has the minimal free resolution predicted by the MRC is what we proved before the statement of this proposition. For the rest, 
\[
\begin{array}{rcl}
s & = & h_S(e-1)-t \\
& \geq &  h_S(e-1) - [h_S(e-1) - \frac{1}{3} h_S(e) ]
\end{array}
\]
from which the result follows.
The only comment is that we used the definition of $m_3(e-1)$ and Proposition \ref{intervals}.
\end{proof}


\subsection{Consequences}

\begin{lemma} \label{lemma1}
Assume that $d \geq 4$, $e\ge d+1$ and $(d,e) \neq (4,5)$.
\[
\frac{2}{3} \cdot h_S(e) - \frac{5d}{3} \geq \frac{h_S(e)}{3}  .
\]
\end{lemma}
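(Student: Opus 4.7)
The plan is to rewrite the claimed inequality in a form that makes the role of the hypotheses transparent, and then verify it by a direct arithmetic check using the closed formula for $h_S$ from Remark \ref{formulas}. First I would observe that, clearing denominators, the desired inequality
\[
\tfrac{2}{3} h_S(e) - \tfrac{5d}{3} \;\geq\; \tfrac{1}{3} h_S(e)
\]
is equivalent to the single inequality $h_S(e) \geq 5d$. So the whole content reduces to establishing this linear lower bound on $h_S(e)$.

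Since $e \geq d+1 \geq d-1$, Remark \ref{formulas} gives the exact expression $h_S(e) = de - \binom{d-1}{2} + 1$. Substituting, the inequality $h_S(e) \geq 5d$ becomes
\[
d(e-5) \;\geq\; \binom{d-1}{2} - 1 \;=\; \frac{(d-1)(d-2) - 2}{2} \;=\; \frac{d(d-3)}{2}.
\]
Since $d \geq 4 > 0$, I can divide by $d$ and rearrange to the clean form $2e \geq d + 7$.

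From this reformulation the result is immediate. The hypothesis $e \geq d+1$ gives $2e \geq 2d + 2$, and $2d + 2 \geq d + 7$ precisely when $d \geq 5$; so the inequality holds automatically for every $d \geq 5$ and every admissible $e$. For $d = 4$, the condition $2e \geq 11$ forces $e \geq 6$, which is exactly what the exclusion $(d,e) \neq (4,5)$ guarantees (since the only value of $e$ with $e \geq d+1 = 5$ that is ruled out is $e = 5$). Thus the only obstacle is the edge case $d=4$, and it is handled precisely by the stated exception; there is no deeper difficulty. The argument will be a short computational paragraph.
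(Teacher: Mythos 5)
Your proposal is correct and follows essentially the same route as the paper: both arguments reduce the inequality to $h_S(e) \geq 5d$ via the formula $h_S(e) = de - \binom{d-1}{2} + 1$ and then to the condition $2e \geq d+7$. The only difference is presentational --- the paper simply asserts ``$e \geq \frac{d+7}{2}$'' at the outset, whereas you spell out explicitly why that follows from $e \geq d+1$ for $d \geq 5$ and from the exclusion of $(d,e)=(4,5)$ when $d=4$, which is a welcome clarification.
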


\begin{proof}
Notice that $e \geq \frac{d+7}{2}$, from which it follows that $10 \leq 2e-d+3$. Then
\[
\frac{5}{3} \leq \frac{2e-d+3}{6}
\]
so
\[
\frac{5}{3} d \leq \frac{2de - d^2 + 3d}{6} = \frac{2de - [d^2-3d+2]+2}{6} = \frac{1}{3} \left [ de - \binom{d-1}{2} +1 \right ] = \frac{1}{3} h_S(e).
\]
This means
\[
\frac{2}{3} h_S(e) - \frac{5}{3} d \geq \frac{1}{3} h_S(e)
\]
as desired.
\end{proof}

\begin{lemma} \label{lemma2}
\[
h_S (e-1) -1 \geq m_4(e) .
\]
\end{lemma}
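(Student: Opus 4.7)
The plan is to reduce the inequality to a simple polynomial bound on $h_S(e)$ using the explicit formulas in Remark~\ref{formulas}. Throughout I assume, as in the surrounding context, that $d\geq 4$ and $e\geq d+1$, so in particular $e-1,e,e+1 \geq d-1$ and the closed-form expression $h_S(x) = dx - \binom{d-1}{2}+1$ applies.

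First I would rewrite $m_4(e)$ explicitly. Since $h_S(e+1) = h_S(e) + d$, the defining condition $3(h_S(e)-t) \leq h_S(e+1)$ becomes $t \geq \frac{2h_S(e)-d}{3}$, so
\[
m_4(e) = \left\lceil \frac{2h_S(e)-d}{3} \right\rceil.
\]
Because $h_S(e-1)-1$ is an integer, the desired inequality $h_S(e-1)-1 \geq m_4(e)$ is equivalent to the real inequality
\[
3\bigl(h_S(e-1)-1\bigr) \;\geq\; 2h_S(e)-d.
\]
Substituting $h_S(e-1) = h_S(e) - d$ on the left, this simplifies to $h_S(e) \geq 2d+3$.

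The remaining step is to verify $h_S(e) \geq 2d+3$. Using monotonicity of $h_S$ together with the formula from Remark~\ref{formulas}, I would bound
\[
h_S(e) \;\geq\; h_S(d+1) \;=\; d(d+1) - \binom{d-1}{2} + 1 \;=\; \frac{d^2+5d}{2},
\]
and then note that $\frac{d^2+5d}{2} \geq 2d+3$ is equivalent to $d^2+d-6 \geq 0$, i.e., $(d-2)(d+3)\geq 0$, which holds for all $d\geq 2$ and in particular for $d\geq 4$. This chain of inequalities closes the argument.

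No step here is a real obstacle; the only mildly delicate point is handling the ceiling in the definition of $m_4(e)$ correctly, which is why I prefer to convert the statement to an integer inequality before clearing denominators. Everything else is a direct substitution using $h_S(e+1)=h_S(e)+d$ and $h_S(e-1)=h_S(e)-d$, which are both immediate from the linear form of $h_S$ in this range.
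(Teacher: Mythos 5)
Your proof is correct and follows essentially the same route as the paper: both unwind the definition of $m_4(e)$ into the integer inequality $3\bigl(h_S(e)-(h_S(e-1)-1)\bigr)\leq h_S(e+1)$ and verify it using the linear formula $h_S(x)=dx-\binom{d-1}{2}+1$ together with $e\geq d+1$. Your extra care with the ceiling is harmless but not needed in the paper's phrasing, since exhibiting a $t$ satisfying the defining condition of the minimum already bounds $m_4(e)$ from above.
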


\begin{proof}
We have to show that
\[
3 \left [ h_S(e) - (h_S(e-1)-1) \right ] \leq h_S(e+1).
\]
Using Remark \ref{formulas} and the fact that $h_S(e) - h_S(e-1) = d$, we have to show that
\[
3d+3 \leq d(e+1) - \binom{d-1}{2} + 1 .
\]
This reduces to showing that
\[
\frac{d-3}{2} + \frac{3}{d} \leq e-2,
\]
which is clearly true.
\end{proof}

Combining Proposition \ref{type 1}, Proposition \ref{type 2}, Lemma \ref{lemma1} and Lemma \ref{lemma2}, we obtain the following.

\begin{theorem} \label{MRC theorem}
Assume that there is some irreducible surface $S$ of degree $d$ such that

\begin{itemize}
\item[(a)] $d$ is even;

\item[(b)] for $e \leq d$, $X_{e,t}$ satisfies the MRC for any $t$;

\item[(c)] for $e \geq d+1$ and $t$ satisfying the conditions of Propositions \ref{type 1} and \ref{type 2}, Gorenstein links of types 1 and 2 can be made for $X_{e,t}$.

\end{itemize}

\noindent Then the MRC holds on $S$, hence on a general surface  of degree $d$.
\end{theorem}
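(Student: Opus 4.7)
The argument is by strong induction on the socle degree $e$. The base case $e\le d$ is delivered by hypothesis (b). Fix $e\ge d+1$ and assume that for every $e'<e$ and every admissible surplus $t'$, the set $X_{e',t'}$ has the resolution predicted by the MRC. By Proposition~\ref{intervals}, it suffices to establish the MRC at the four critical surpluses $s\in\{m_1(e),m_2(e),m_3(e),m_4(e)\}$; strictly speaking we only need the Cohen--Macaulay Type Conjecture at $m_1(e),m_2(e)$ and the Ideal Generation Conjecture at $m_3(e),m_4(e)$, but any link we produce will give the full MRC on the residual, so it is enough to realize each $X_{e,m_i(e)}$ as such a residual.

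For the small values $s\in\{m_1(e),m_2(e)\}$ I would use Type 1 links. Set $t:=h_S(e-2)-s$, so that a link from $X_{e-2,t}$ (whose MRC is known by induction, since $e-2<e$) through a relatively compressed arithmetically Gorenstein $G$ of socle degree $2e-2$ --- supplied by hypothesis (c) --- lands on a set $Z=X_{e,s}$ with the predicted resolution. The admissibility condition $t\ge m_2(e-2)$ of Proposition~\ref{type 1} is equivalent to $s\le \tfrac{2}{3}h_S(e)-\tfrac{5d}{3}$, which one checks at $s=m_1(e),m_2(e)$ from Remark~\ref{formulas} and the estimate $m_2(e)\approx h_S(e-1)/3$. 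For the large values $s\in\{m_3(e),m_4(e)\}$ I would use Type 2 links, available because $d$ is even by hypothesis (a). Set $t:=h_S(e-1)-s$ and link $X_{e-1,t}$ through a $G$ of socle degree $2e-1$ with resolution~(\ref{type 2 resolution}), again supplied by (c); Proposition~\ref{type 2} requires $1\le t\le m_3(e-1)$, i.e.\ $\tfrac{h_S(e)}{3}\le s\le h_S(e-1)-1$. The upper bound applied to $s=m_4(e)$ is exactly Lemma~\ref{lemma2}, and the lower bound dovetails with the Type 1 upper bound by Lemma~\ref{lemma1}, so that the two surplus windows jointly cover all four $m_i(e)$.

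Once the MRC holds at the four critical surpluses, Proposition~\ref{intervals} extends it to every $t$, completing the inductive step. Hence the MRC holds for every $X_{e,t}$ on this particular $S$, and by semicontinuity of Betti numbers it transfers to a general surface of degree $d$. I do not expect a genuine obstacle in this packaging step itself: the inequality bookkeeping reduces to the elementary Lemmas~\ref{lemma1} and~\ref{lemma2}, and the small exceptional case $(d,e)=(4,5)$ excluded from Lemma~\ref{lemma1} is absorbed by integer rounding (a direct computation shows $m_1(5),m_2(5)\in\{4,5\}$ sit inside the Type 1 window $[1,5]$ and $m_3(5),m_4(5)\in\{10,11\}$ sit inside the Type 2 window $[6,13]$). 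The actual difficulty lies elsewhere in the paper --- in verifying hypothesis (c), namely producing the required Gorenstein schemes $G$ through general $X_{e,t}$ with the prescribed resolution --- and that is the subject of Section~4.
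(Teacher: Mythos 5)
Your argument is correct and follows essentially the same route as the paper's proof: induction on $e$ with base case supplied by (b), Type 1 links covering the small surpluses and Type 2 links the large ones, glued together by Lemmas \ref{lemma1} and \ref{lemma2} and Proposition \ref{intervals}, with the exceptional case $(d,e)=(4,5)$ disposed of by the same explicit numerics. The only cosmetic difference is that you reduce to the four critical surpluses $m_i(e)$ before placing each in one of the two windows, whereas the paper first covers the whole interval $1\le s\le h_S(e-1)-1$ and then invokes Proposition \ref{intervals}; the two orderings are logically equivalent.
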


\begin{proof}
The proof is by induction on $e$. For $e \leq d$, assumption (b) begins the induction.

We first consider the case $d=4, e=5$. In this case $m_2(e-2) = 2$ and $m_3(e-1) = 8$. Proposition \ref{type 1} shows that $X_{e,s}$ has the desired resolution for $1 \leq s \leq 10-t \leq 8$. Proposition \ref{type 2} shows that $X_{e,s}$ has the desired resolution for $13 \geq s = 14-t \geq 6$. Hence the result holds for $d=4, e=5$. From now on in this proof we assume that $(d,e) \neq (4,5)$.

Thanks to Proposition \ref{type 1}, Proposition \ref{type 2}, Lemma \ref{lemma1} and semicontinuity, the MRC is true for $X_{e,s}$ for
\[
1 \leq s \leq h_S(e-1)-t.
\]
Thanks to Lemma \ref{lemma2} and Proposition \ref{intervals}, this is enough to deduce the truth of the MRC for socle degree $e$.
\end{proof}


\subsection{Surfaces of odd degree} \label{problem with odd degree surface}

In using links of type 2 we were forced to assume that $d$ is even. In this subsection we discuss the problems with surfaces of odd degree.

\begin{example}
Let $d=5$, $e=6$ and consider the set $X_{6,t}$. Its $h$-vector is $(1,3,6,10,15,20,t)$.
A relatively compressed arithmetically Gorenstein set of points $G$ with socle degree 13 and containing $X_{6,t}$ links $X_{6,t}$ to a set $Z$ with $h$-vector $(1,3,6,10,15,20,25, 25-t)$ so one could hope that the latter set is ``general enough." But consider the minimal free resolutions
\[
0 \rightarrow
\begin{array}{c}
R(-9)^{c_2} \\
\oplus \\
R(-8)^{c_1}
\end{array}
\rightarrow
\begin{array}{c}
R(-8)^{b_2} \\
\oplus \\
R(-7)^{b_1}
\end{array}
\rightarrow
\begin{array}{c}
R(-7)^{a_2} \\
\oplus \\
R(-6)^{a_1}\\
\oplus \\
R(-5)
\end{array}
\rightarrow I_{X_{6,t}} \rightarrow 0
\]
where $c_2 = t$,
$b_2 \cdot c_1 = 0$, $a_2 \cdot b_1 = 0$, $a_1 = 25-t$, and
\[
0 \rightarrow R(-16) \rightarrow
\begin{array}{c}
R(-11) \\
\oplus \\
R(-9)^{5}\\
\oplus \\
R(-8)
\end{array}
\rightarrow
\begin{array}{c}
R(-8) \\
\oplus \\
R(-7)^{5}\\
\oplus \\
R(-5)
\end{array}
\rightarrow I_G \rightarrow 0
\]
(even supposing that $G$ could be found with only one generator of degree 8, which is not obvious).
The free resolution for $Z$ then has the form
\[
0 \rightarrow
\begin{array}{c}
R(-10)^{a_1} \\
\oplus \\
R(-9)^{a_2}
\end{array}
\rightarrow
\begin{array}{c}
R(-9)^{5+b_1} \\
\oplus \\
R(-8)^{1+b_2}
\end{array}
\rightarrow
\begin{array}{c}
R(-8)^{1+c_1} \\
\oplus \\
R(-7)^{5+c_2}\\
\oplus \\
R(-5)
\end{array}
\rightarrow I_{Z} \rightarrow 0
\]
Assume that all possible splitting has been performed. The copy of $R(-8)$ coming from the minimal generators of $I_G$ does not split with anything, so the summand $R(-8)^{1+b_2}$ does not reduce to zero in the minimal free resolution of $I_Z$.
On the other hand, it is not at all clear that it is possible to split off the $R(-8)$ corresponding to a first syzygy of $I_G$, and even if it were possible, if $t < m_1(6)$ then $c_1 > 0$ and the summands $R(-8)^{c_1}$ do not split. Thus the latter minimal free resolution is probably not of the desired type in general, and certainly not if $t < m_1(6)$.
\end{example}


\section{Finding Gorenstein links}

In the previous section we showed what we can say about the MRC assuming that Gorenstein links of a certain kind could be found. In this section we address this problem.

There are two approaches that suggest themselves. One is to find a special surface $S$ of degree $d$ where we have enough control to construct many Gorenstein sets of points, and try to inductively produce sets $X$ on this surface, for any socle degree and surplus, having the desired resolution. Then consider the incidence variety $I$ inside $Hilb^m(\mathbb P^3) \times A $, where $A \subset H^0(\mathcal O_{\mathbb P^3}(4))$ is the open subset consisting of reduced, irreducible quartic surfaces. Since any fiber of either projection from $I$ is irreducible, $I$ is irreducible as well. Thus semicontinuity gives the result for a general set of points of the same cardinality on a general surface of degree $d$. 

The other approach is to consider general $S$ and use dimension counts to force the existence of the two types of Gorenstein links for $X_{e,t}$. An earlier version of this paper took the latter approach, but the referee pointed out one step that we were not able to justify (although we believe it to be correct). Now instead we solve this problem using the former approach. 

In the situation where $e \geq d+1$, $S$ is the unique surface of degree $d$ containing $X_{e,t}$, and more importantly it is the unique surface of degree $d$ containing the Gorenstein sets $G$ that we want to use for our links. Hence it corresponds to a minimal generator of $I_G$.

We now turn to the MRC in the case of quartics ($d=4$).

\begin{theorem} \label{MRC on quartics}
Let $S$ be a general surface of degree 4 in $\mathbb P^3$. Then the MRC holds for points on $S$.
\end{theorem}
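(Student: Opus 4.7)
The plan is to apply Theorem \ref{MRC theorem} with $d=4$ and verify each of its three hypotheses on a carefully chosen quartic surface, then transfer the conclusion to a general quartic via the semicontinuity argument sketched at the start of Section~4. Condition (a) is automatic since $4$ is even. Condition (b) is provided by the Ballico-Geramita theorem \cite{BG}: as remarked in Section~2, for $e \le 4$ the MRC on $S$ coincides with the classical MRC for points in $\mathbb{P}^3$, which is their result. The entire burden of the proof therefore lies on condition (c): for every $e \ge 5$ and every $t$ in the ranges dictated by Propositions \ref{type 1} and \ref{type 2}, one must construct arithmetically Gorenstein zero-dimensional schemes $G \supset X_{e-2,t}$ of socle degree $2e-2$ with resolution \eqref{type 1 resolution} (links of type 1) and $G' \supset X_{e-1,t}$ of socle degree $2e-1$ with resolution \eqref{type 2 resolution} (links of type 2).

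Following the strategy announced at the start of Section~4, I would work on a single well-chosen special quartic $S_0$ rather than a general one, and recover the conclusion for general quartics from the irreducibility of the incidence variety $I \subset \mathrm{Hilb}^m(\mathbb{P}^3) \times A$. The special surface $S_0$ is chosen to contain a rich family of arithmetically Cohen-Macaulay curves; on such a curve $C$ the twisted anticanonical construction of \cite{KMMNP} produces arithmetically Gorenstein zero-dimensional subschemes whose socle degree and minimal free resolution can be controlled by the degree and genus of $C$ together with the MFR of $I_C$. Liaison inside $S_0$ furnishes additional ACM curves beyond those produced by hand, expanding the pool of possible $C$'s.

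The verification of (c) proceeds by induction on the socle degree $e$. The inductive step runs as follows: given general $X_{e-2,t}$ (resp.\ $X_{e-1,t}$) on $S_0$, exhibit an ACM curve $C \subset S_0$ whose twisted anticanonical linear system has large enough dimension to contain a divisor $G$ passing through $X_{e-2,t}$ (resp.\ $X_{e-1,t}$) with the prescribed socle degree and MFR shape \eqref{type 1 resolution} (resp.\ \eqref{type 2 resolution}). Propositions \ref{type 1} and \ref{type 2} then deliver the MRC for the residual $Z = X_{e,s}$ in their respective explicit ranges of $s$, while Lemmas \ref{lemma1} and \ref{lemma2} together with Proposition \ref{intervals} show that the two ranges jointly cover every surplus value. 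The base of the induction — the handful of small socle degrees $e$ at which the generic ACM-curve construction on $S_0$ does not yet supply a suitable $G$ — is handled directly, making essential use of the classical existence results for ACM curves due to Ellingsrud \cite{E} and Perrin \cite{P}.

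The main obstacle is the existence claim itself: for a general $X_{e-2,t}$ (or $X_{e-1,t}$) on $S_0$, one must exhibit an ACM curve $C \subset S_0$ of precisely the right degree and genus such that the relevant twisted anticanonical linear system is large enough to pick up the given points \emph{and} so that a general $G$ in that system has exactly the resolution \eqref{type 1 resolution} or \eqref{type 2 resolution}, with no ghost terms arising in the mapping cone. The MRC-predicted resolutions leave no slack — the numbers of minimal generators and first syzygies of $I_G$ are forced — so this reduces to a careful dimension count on a very specific family of ACM curves living on $S_0$. This is precisely the step that uses the special geometry of quartic surfaces (and explains why the same strategy stalls for $d \ge 5$, cf.\ Remark \ref{higher degree remark}); once it is carried out, Theorem \ref{MRC theorem} immediately yields the MRC on $S_0$ and hence, by semicontinuity, on a general quartic in $\mathbb{P}^3$.
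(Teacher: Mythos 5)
Your outline follows the paper's strategy exactly -- reduce to Theorem \ref{MRC theorem}, work on a special quartic $S_0$ rich in ACM curves, produce the Gorenstein sets as twisted anticanonical divisors \`a la \cite{KMMNP}, and finish by semicontinuity through the incidence variety. But the entire mathematical content of the theorem is condition (c), and your proposal only \emph{names} that step (``a careful dimension count on a very specific family of ACM curves'') without exhibiting the curves or performing the count. The paper's proof consists precisely of that data: $S_0$ is built to contain a smooth ACM curve $C_1$ of degree $6$ and genus $3$ passing through $12$ general points (this is where Ellingsrud and Perrin enter -- not in the base case of the induction, but in the construction of $S_0$ itself) together with a $(2,2)$ complete intersection curve $C_2$ meeting $C_1$ in $8$ of those points. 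For a link of type 1 one links $C_1$ by a complete intersection of type $(4,e)$ whose degree-$e$ element is forced through $X_{e-2,t}$; the count $\binom{e+3}{3}-6e+2-\binom{e-1}{3}-|X_{e-2,t}| = 6e-16-t>0$ makes this possible, and then Riemann--Roch on the residual curve $C$ shows $\dim|(2e-3)H_C-K_C|$ exceeds $|X_{e-2,t}|$. Type 2 is the analogous computation starting from $C_2$. These inequalities are exactly where $d=4$ is used (for $d>4$ the type-2 count fails, cf.\ Remark \ref{higher degree remark}), so a proof that omits them has omitted the theorem.

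There is also a concrete error in your treatment of condition (b). On the special surface $S_0$ you cannot simply invoke Ballico--Geramita: because $S_0$ is constrained to contain $C_1\cup C_2$, it can be made to pass through only $16$ points that are general in $\mathbb P^3$ (the $12$ on $C_1$ plus $4$ more, since the quartics through $C_1\cup C_2$ form only a $5$-dimensional vector space), whereas the critical cardinalities of Proposition \ref{intervals} for socle degree $4$ are $23$, $24$ and $28$. A general set of that many points \emph{on $S_0$} is not a general set of points in $\mathbb P^3$, so \cite{BG} says nothing about it, and the induction must start on $S_0$ itself for the links of Propositions \ref{type 1} and \ref{type 2} to apply. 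The paper closes this gap with three additional explicit Gorenstein liaison constructions (linking from $C_1\cup C_2$ and from $C_2$ to produce arithmetically Gorenstein sets with $h$-vectors $(1,3,6,10,6,3,1)$ and $(1,3,6,10,10,6,3,1)$ containing $7$, $6$, resp.\ $12$ general points). Your proposal needs both of these ingredients supplied before it constitutes a proof.
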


\begin{proof}


As mentioned above, we will find a specific quartic on which the MRC holds, and conclude by semicontinuity. To find a quartic surface $S$ suitable for our needs, we make the following construction. Let $Z$ be a set of 16 general points in $\mathbb P^3$ and write $Z = Z_1 \cup Z_2$ where $|Z_1| = 12$ and $|Z_2| = 4$. 

We first claim that we can find a smooth arithmetically Cohen-Macaulay curve $C_1$ of degree 6 and genus 3 containing $Z_1$. Indeed, thanks to \cite{E} Example 2 (page 430), or more precisely \cite{E} Th\'eor\`eme 2, the dimension of the Hilbert scheme $H_{6,3}$ of curves in $\mathbb P^3$ of degree 6 and genus 3 (the general one of which is arithmetically Cohen-Macaulay with minimal free resolution
\[
0 \rightarrow R(-4)^3 \rightarrow R(-3)^4 \rightarrow I \rightarrow 0 
\]
and is smooth) has dimension 24. Then \cite{P} Proposition 5.11.bis and Corollaire 5.11 show that there is a smooth, arithmetically Cohen-Macaulay curve $C_1$ of degree 6 and genus 3 containing $Z_1$, and \cite{P} Proposition 2.1 shows that 12 is the greatest number of general points for which this is true.

Let $Y \subset Z_1$ be a choice of 8 of the points. Since $\dim [R]_2 = 10$, there is a pencil of quadric surfaces containing $Y$, defining a complete intersection curve $C_2$ of type $(2,2)$ containing $Y$. Both $C_1$ and $C_2$ are smooth, and they intersect at the eight points $Y$. The union $C_1 \cup C_2$ is arithmetically Cohen-Macaulay, since $I_{C_1} + I_{C_2}$ is the saturated ideal of $Y$; indeed, this follows from the exact sequence
\[
0 \rightarrow I_{C_1 \cup C_2} \rightarrow I_{C_1} \oplus I_{C_2} \rightarrow I_{C_1} + I_{C_2} \rightarrow 0.
\]
One checks that the union $C_1 \cup C_2$ lies on a 5-dimensional (vector space) family of quartic surfaces.  Thus we can find a quartic surface $S$ containing $C_1 \cup C_2 \cup Z_2$. Notice that $Z_1$ also lies on $S$ since $S$ contains $C_1$. In other words, we can find a quartic surface containing a suitable union $C_1 \cup C_2$ and also containing 16 general points (any subset of which has the expected resolution thanks to \cite{BG}).

We first focus on part (c) of Theorem \ref{MRC theorem}, which is the inductive step, and then turn to part (b) which begins the induction.

We first consider Gorenstein links of type 1 as described in Subsection \ref{Links of type 1} and specifically in Proposition \ref{type 1}. In particular, we will be considering general sets of points $X_{e-2,t}$ of socle degree $e-2$ satisfying $m_2(e-2) \leq t < h_S(e-2)$ on the quartic surface $S$ constructed above. We assume $e \geq 5$.

We want to show that a Gorenstein link of type 1 exists. That is, we want a relatively compressed Gorenstein set of points, $G$, on $S$ with socle degree $2e-2$ and containing $X_{e-2,t}$. The $h$-vectors of $X_{e-2,t}$ and of the desired $G$ are as follows:

{\footnotesize
\begin{center}
\begin{equation} \label{type 1 hvector}
\begin{array}{c|ccccccccccccccccccc}
\hbox{deg} & 0 & 1 & 2 & 3 & 4 & 5 & \dots & e-3 & e-2 & e-1 & e & \dots & 2e-4 & 2e-3 & 2e-2 \\ \hline \\
G & 1 & 3 & 6 & 10 & 14 & 18 & \dots & 4e-14 & 4e-10 & 4e-6 & 4e-10 & \dots & 6 & 3 & 1 \\ \\
X_{e-2,t} & 1 & 3 & 6 & 10 & 14 & 18 & \dots & 4e-14 & t & - & - & \dots & - & - & -

\end{array}
\end{equation}
\end{center}
}
A standard calculation gives
\[
| X_{e-2,t} | = 2e^2 - 12e + 20 + t,
\]
and 
\[
\deg G = |G| = 8 \binom{e-1}{2} + 6.
\]
To produce $G$, we begin with the smooth arithmetically Cohen-Macaulay curve $C_1$ of degree 6 and genus 3 mentioned above. Notice that 
\[
\dim [I_{C_1}]_m = \binom{m+3}{3} - h^0(\mathcal O_{C_1}(m)) = \binom{m+3}{3} - (6m - 3 + 1) .
\]

In particular,
\[
\dim [I_{C_1}]_e = \binom{e+3}{3} - 6e + 2.
\]
Let $F$ be a form of degree 4 defining the surface $S$. We also note that in $[I_{C_1}]_e$ the subspace given by multiples of $F$ has dimension $\binom{e-1}{3}$.

If we link $C_1$ using a  complete intersection of type $(4,e)$, defined by $S$ and a general element of $[I_{C_1}]_e$, the residual is a smooth curve $C$ of degree $4e-6$ and (one computes) genus $g = 4 \binom{e-2}{2} + 4e -9$. Let $K_C$ be a canonical divisor on $C$ and $H_C$ a general hyperplane section. A general element of the linear system
\[
|(2e-3)H_C - K_C|
\]
is arithmetically Gorenstein with the $h$-vector given in (\ref{type 1 hvector}) and the minimal free resolution given in (\ref{type 1 resolution}). Indeed, this is a consequence of the construction given in \cite{KMMNP} Lemma 5.4, taking $\ell = 0$ and $d = 2e-3$.

We now claim that this link can even be performed if we further require the element of degree $e$ to vanish at $|X_{e-2,t}|$ general points on $S$. Notice that since our general points lie on $S$, we have to account for the surfaces containing $S$ as a component, i.e. we have to subtract $\binom{e-1}{3}$ as well. Then we have
\[
\begin{array}{rcl}
\displaystyle \binom{e+3}{3} - 6e + 2 - \binom{e-1}{3} - \left [ 2e^2 - 12e + 20 + t \right ] & = & 6e-16-t \\ 
& \geq & 6e-16 - h_S(e-2) \\
& =  & 6e-16 - (4e-10) \\
& > & 0.
\end{array}
\]
Thus given $|X_{e-2,t}|$ general points of $S$, we can find a smooth curve $C$ of degree $4e-6$ and genus $4 \binom{e-2}{2} + 4e-9$ containing those points.

On $C$ we use Riemann-Roch to compute the dimension of the linear system of arithmetically Gorenstein sets of points:
\[
\begin{array}{rcl}
\dim | (2e-3)H_C - K_C | & = & - 1 + h^0(\mathcal O_C ((2e-3)H_C - K_C)) \\ \\
& = & -1 + (\deg G - g_C + 1 + h^1 (\mathcal O_C ((2e-3)H_C - K_C)) \\ \\
& \geq & \displaystyle -1 + \left [ 8 \binom{e-1}{2} + 6 \right ] - \left [ 4 \binom{e-2}{2} + 4e - 9 \right ] + 1 \\ \\
& = & \displaystyle \frac{4e^2 - 12e + 22}{2}.
\end{array}
\]

Finally, we have to check that this linear system is big enough that we can find an element containing the subset $X_{e-2,t}$ of $C$. 
\[
\frac{4e^2 - 12e + 22}{2} - (2e^2 - 12e + 20 + t) = 6e -9 -t > 6e-9 - (4e-10) = 2e + 1 > 0.
\]
Because the minimal free resolution of $G$ is the desired one, we have shown part (c) of Theorem \ref{MRC theorem} for links of type 1. 


We next consider Gorenstein links of type 2 as described in Subsection \ref{Links of type 2}, so in particular we will be considering general sets of points $X_{e-1,t}$ of socle degree $e-1$ satisfying $ t \leq m_3(e-1)$ on a suitable quartic surface $S$. Again, we will handle the case $e \leq 4$ later and for now assume $e \geq 5$.

We want to show that a Gorenstein link of type 2 exists on $S$. That is, we want a relatively compressed Gorenstein set of points, $G$, with socle degree $2e-1$, containing $X_{e-1,t}$ and having the correct minimal free resolution. The $h$-vectors of $X_{e-1,t}$ and of the desired $G$ are as follows:

{\footnotesize
\begin{center}
\begin{equation} \label{type 2 hvector}
\begin{array}{c|ccccccccccccccccccc}
\hbox{deg} & 0 & 1 & 2 & 3 & 4 & 5 & \dots & e-2 & e-1 & e & e+1 & \dots & 2e-3 & 2e-2 & 2e-1 \\ \hline \\
G & 1 & 3 & 6 & 10 & 14 & 18 & \dots & 4e-10 & 4e-6 & 4e-6 & 4e-10 & \dots & 6 & 3 & 1 \\ \\
X_{e-1,t} & 1 & 3 & 6 & 10 & 14 & 18 & \dots & 4e-10 & t & - & - & \dots & - & - & -

\end{array}
\end{equation}
\end{center}
}
A standard calculation gives
\[
| X_{e-1,t} | = 2e^2 - 8e + 10 + t,
\]
and 
\[
\deg G = |G| = 4e^2 - 8e +8
\]
To produce $G$, we begin with our smooth complete intersection curve $C_2$ of type $(2,2)$, degree 4 and genus 1 on $S$. Notice that 
\[
\dim [I_{C_2}]_m = \binom{m+3}{3} - h^0(\mathcal O_{C_2}(m)) = \binom{m+3}{3} - (4m-1+1) .
\]
In particular, when $m=4$ we have $\dim [I_{C_2}]_4 = 19$. 


We also note that 
\[
\dim [I_{C_2}]_e = \binom{e+3}{3} - 4e
\]
Let $F$ be a form of degree 4 defining the surface $S$. We also note that in $[I_{C_2}]_e$ the subspace given by multiples of $F$ has dimension $\binom{e-1}{3}$.

If we link $C_2$ using a  complete intersection of type $(4,e)$, defined by $S$ and a general element of $[I_{C_2}]_e$, the residual is a smooth curve $C$ of degree $4e-4$  and (one computes) genus $g = 2e^2-4e+1$. Let $K_C$ be a canonical divisor on $C$ and $H_C$ a general hyperplane section. A general element of the linear system
\[
|(2e-2)H_C - K_C|
\]
is arithmetically Gorenstein with the $h$-vector given in (\ref{type 2 hvector}) and the minimal free resolution given in (\ref{type 2 resolution}). Indeed, this is a consequence of the construction given in \cite{KMMNP} Lemma 5.4, taking $\ell = 0$ and $d = 2e-2$.

We now claim that this link can even be performed if we further require the element of degree $e$ to vanish at $|X_{e-1,t}|$ general points on $S$. Indeed,
\[
\begin{array}{rcl}
\displaystyle \binom{e+3}{3} - 4e - \binom{e-1}{3} - \left [ 2e^2 - 8e + 10 + t \right ] & = & 4e-8-t\\ 
& \geq & 4e-8 - m_3(e-1) \\
& =  & \displaystyle 4e-8 - \left \lfloor \frac{8e-16}{3} \right \rfloor  \\
& > & 0
\end{array}
\]
(since $e \geq 5$).
Thus given $|X_{e-1,t}|$ general points of $S$, we can find a smooth curve $C$ of degree $4e-4$ and genus $2e^2-4e+1$ containing those points.

On $C$ we use Riemann-Roch to compute the dimension of the corresponding linear system of arithmetically Gorenstein sets of points:
\[
\begin{array}{rcl}
\dim | (2e-2)H_C - K_C | & = & - 1 + h^0(\mathcal O_C ((2e-2)H_C - K_C)) \\ \\
& = & -1 + (\deg G - g_C + 1 + h^1 (\mathcal O_C ((2e-2)H_C - K_C)) \\ \\
& \geq & \displaystyle -1 + \left [ 4e^2-8e+8 \right ] - \left [ 2e^2-4e+1 \right ] + 1 \\ \\
& = & 2e^2 -4e+7
\end{array}
\]

Now we have to check that this linear system is big enough that we can find an element containing the subset $X_{e-2,t}$ of $C$. 
\[
2e^2 - 4e+7 - (2e^2 - 8e+10 + t) = 4e -3 -t \geq 4e-3 - \left \lfloor \frac{8e-16}{3} \right \rfloor > 0.
\]
Because the minimal free resolution of $G$ is the desired one, we have shown part (c) of Theorem \ref{MRC theorem} also for links of type 2. 

What is left is to show that a general set of points of socle degree $\leq 4$ on $S$ has the correct resolution. 
Notice that on $S$, 
\[
\begin{array}{cccccccc}
m_1(2) & = & 1 \\
m_2(2) & = & m_3(2) & = &  2 \\
m_4(2) & = & 3 \\
m_1(3) & = & m_2(3) & = & 2 \\
m_3(3) & = & 5 \\
m_4(3) & = & 6 \\
m_1(4) & = & 3 \\
m_2(4) & = & 4 \\
m_3(4) & = & m_4(4) & = & 8
\end{array}
\]
so we only have to check the critical values of 5, 6, 7, 12, 15, 16, 23, 24, or 28 general points (see Proposition \ref{intervals}) to start the induction. 

Our construction of $S$ included the fact that up to 16 points with the expected resolution lie on $S$, so by semicontinuity we know that the MRC holds for the critical values up to 16. We only have to check 23, 24 and 28.

We consider 23 points on $S$. In the construction of $S$ we considered the Cohen-Macaulay union $C_1 \cup C_2$. Using the smooth surface $S$ and a general element in $[I_{C_1 \cup C_2}]_5$, we obtain a smooth Cohen-Macaulay residual curve $C$ of degree 10 and genus 11. A general element $G$ of the linear system $|5 H_C - K_C|$ is a reduced, arithmetically Gorenstein set of points with $h$-vector $(1,3,6,10,6,3,1)$ and minimal free resolution
\[
0 \rightarrow R(-9) \rightarrow R(-5)^9 \rightarrow R(-4)^9 \rightarrow I_G \rightarrow 0.
\]
As above, the linear system is big enough to contain 7 general points, and the residual is a set of 23 points with the desired resolution. To construct 24 points with the right resolution, the procedure is the same but we just use 6 points.

To handle 28 points we use the same construction as in the above proof, linking from a curve $C_2$ that is the complete intersection of two quadrics, using the complete intersection of $S$ and a general quartic in $I_{C_2}$, to get a residual curve of degree 12, genus 17 and $h$-vector $(1,2,3,4,2)$. Then proceeding as in the case of type 2 above, we consider the linear system $|6H_C - K_C|$, a general element of which is a reduced, arithmetically Gorenstein set of points with $h$-vector $(1,3,6,10, 10, 6,3,1)$ and minimal free resolution
\[
0 \rightarrow R(-10) \rightarrow R(-6)^5 \rightarrow R(-4)^5 \rightarrow I_G \rightarrow 0.
\]
Computing dimensions as before allows us to find such a set of points containing 12 general ones, and the residual is the desired set of 28 points.
\end{proof}


\begin{remark} \label{higher degree remark}
For surfaces of even higher degree, arguing as above one shows that the links of type 1 continue to exist. However, unfortunately the dimension counts do not force the existence of the links of type 2 when $d>4$. So in order to prove the MRC for surfaces
of higher degree, another approach may be required.
\end{remark}


\begin{thebibliography}{BDHHSS}

\bibitem[BG]{BG} E.\ Ballico and A.\ Geramita, \emph{The minimal free resolution of the ideal of $s$ general points in $\mathbb{P}^3$}, Proceedings of the 1984 Vancouver conference on algebraic geometry, CMS Conf.\ Proc.\ no.\ 6 (1986), pgs 1--10. Amer.\ Math.\ Soc.\, Providence, RI.



\bibitem[C]{casanellas} M. Casanellas, {\em The minimal resolution conjecture for points on the cubic surface}, Canad. J. Math. {\bf 61} (2009), 29--49.




\bibitem[EP]{EP} D. Eisenbud and S. Popescu {\em Gale duality and free resolutions of ideals of points}, Invent. Math. {\bf 136} (1999), 419--449.

\bibitem[EPSW]{EPSW}
D. Eisenbud, S. Popescu, F.-O. Schreyer, C. Walter, {\em Exterior algebra methods for the minimal resolution conjecture}, Duke Math. J. 112 (2) (2002)
379--395.

\bibitem[E]{E} G. Ellingsrud, {\em Sur le sch\'ema de Hilbert des vari\'et\'es de codimension 2 dans $\mathbb P^e$ \`a c\^one de Cohen-Macaulay}, Annales scientifiques de l'\'E.N.S. $4^e$ s\'erie, tome 8, n$^o$ 4 (1975), pp. 423--431. 


\bibitem[FMP]{FMP}
G. Farkas, M. Musta\c{t}\u{a}, M. Popa, {\em Divisors on $\mathcal M_{g,g+1}$ and the minimal resolution conjecture for points on canonical curves}, Ann. Sci. \'Ecole Norm. Sup.
36 (4) (2003) 553--581.

\bibitem[G]{gaeta}
F. Gaeta, {\em A fully explicit resolution of the ideal defining $N$ generic points in the plane}, preprint,
1995.

\bibitem[GMR]{GMR}
S. Giuffrida, R. Maggioni, A. Ragusa, {\em Resolutions of generic points lying on a smooth quadric}, Manuscripta Math. 91 (4) (1996) 421--444.

\bibitem[HS]{HS}
A. Hirschowitz, C. Simpson, {\em La r\'esolution minimale de lÕid\'eal dÕun arrangement g\'en\'eral dÕun grand nombre de points dans $\mathbb P^n$}, Invent. Math. 126 (3)
(1996) 467--503.


\bibitem[KMMNP]{KMMNP} J. Kleppe, J. Migliore, R.M. Mir\'o-Roig, U. Nagel and C. Peterson, ``Gorenstein liaison, complete intersection liaison invariants and unobstructedness," Mem. Amer. Math. Soc. volume 154, number 732 (2001).

\bibitem[KM]{KM} J. Kleppe and R.M. Mir\'o-Roig, {\em The dimension of the Hilbert scheme of Gorenstein codimension 3 subschemes}, J. Pure Appl. Algebra {\bf 127} (1998), 73--82.

\bibitem[L]{lorenzini} A. Lorenzini, {\em The minimal resolution conjecture}, J. Algebra {\bf 156} (1993), 5--35.


\bibitem[MP]{MP} J. Migliore and M. Patnott, {\em Minimal free resolutions of general points lying on cubic surfaces}, J. Pure Appl. Algebra {\bf 215} (2011), 1737--1746.

\bibitem[MP1]{MRP1} R. Mir\'o-Roig, J. Pons-Llopis, {\em The minimal resolution conjecture for points on del Pezzo surfaces}, Algebra \& Number Theory 6 (2012), 27--46.

\bibitem[MP2]{MRP2} R. Mir\'o-Roig, J. Pons-Llopis, {\em Minimal free resolution  for points on surfaces}, J. Algebra 357 (2012), 304--318.


\bibitem[M]{mustata} M. Musta\c t\u a, {\em Graded Betti numbers of general finite subsets of points on projective varieties}, Le Matematiche {\bf 53} (1998), 53--81.

\bibitem[P]{P} D. Perrin, ``Courbes passant par $m$ points g\'en\'eraux de $\mathbb P^3$,'' M\'emores de la S.M.F. $2^e$ s\'erie, tome 28--29 (1987).

\bibitem[PS]{PS}  C. Peskine and L. Szpiro, {\em  Liaison des variŽtŽs algŽbriques. I.} Invent. Math. {\bf 26} (1974), 271--302.

\bibitem[W]{W}
C. Walter, {\em The minimal free resolution of the homogeneous ideal of $s$ general points in $\mathbb P^4$}, Math. Zeit. 219 (2) (1995) 231--234.

\bibitem[Wei]{Wei}
C. Weibel, {\em An introduction to homological algebra}.
Cambridge University Press, 1994.



\end{thebibliography}
\end{document}